\newtheorem{theorem}{Theorem}[section]
\newtheorem{proposition}[theorem]{Proposition}
\theoremstyle{definition}
\newtheorem{definition}[theorem]{Definition}
\newtheorem{counter example}[theorem]{Counter Example}
\newtheorem{corollary}[theorem]{Corollary}
\newtheorem{example}[theorem]{Example}
\numberwithin{equation}{section}
\DeclareRobustCommand{\rchi}{{\mathpalette\irchi\relax}}
\newcommand{\irchi}[2]{\raisebox{\depth}{$#1\chi$}}
\begin{document}
\Large{
		\title{ON A THEOREM OF GRZEGOREK AND LABUDA}
		
		\author[S. Basu]{Sanjib Basu}
		\address{\large{Department of Mathematics,Bethune College,181 Bidhan Sarani}}
		\email{\large{sanjibbasu08@gmail.com}}

	\author[N. Tamang]{Navdeep Tamang}
	\address{\large{Department of Pure Mathematics, University of Calcutta, 35, Ballygunge Circular Road, Kolkata 700019, West Bengal, India}}
	\email{\large{navdeeptamang007@gmail.com}}}

	\begin{abstract}
  This paper presents a generalized version of a theorem of Grzegorek and Labuda in category bases and also endeavours to establish a variant formulation of the same in Marczewski structures. 
	\end{abstract}
\subjclass[2020]{AMS}
\keywords{Point meager Baire base, meager set, abundant set, $\pi$-base, full subset, perfect base, Marczewski classification.}
\thanks{}
	\maketitle

\section{INTRODUCTION}

	In [3] Grzegorek and Labuda gave a common abstraction of two theorems of Sierpi\'{n}ski. These theorems which in turn are connected with the results and techniques  of Luzin and Novikov [5] are as follows:
    \begin{theorem}[\cite{11},Theorem II]
        Let Q be an infinite subset of $\mathbb{R}$. Then Q contains an infinte number of disjoint subsets, each of which has the same outer Lebesgue measure as Q.
    \end{theorem}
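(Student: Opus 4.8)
The plan is to argue through the measurable hull and the notion of a \emph{full} (thick) subset. Write $q=\lambda^*(Q)$ and fix a measurable hull $H\supseteq Q$ with $\lambda(H)=q$; then $Q$ is full in $H$, meaning $\lambda^*(Q\cap M)=\lambda(M)$ for every measurable $M\subseteq H$. The first thing I would record is the criterion that a subset $A\subseteq Q$ satisfies $\lambda^*(A)=q$ exactly when $A$ is itself full in $H$, and that, since $A\subseteq H$, this is in turn equivalent to $A$ meeting every compact $K\subseteq H$ with $\lambda(K)>0$. This last equivalence rests on the identity $\lambda^*(A\cap M)+\lambda_*(M\setminus A)=\lambda(M)$ for measurable $M$, together with inner regularity (so that $\lambda_*(H\setminus A)=0$ is the same as $H\setminus A$ containing no compact set of positive measure). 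Thus the theorem reduces to splitting $Q$ into infinitely many pairwise disjoint pieces, each meeting every compact positive-measure subset of $H$.

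I would first dispose of the two degenerate cases. If $q=0$, then $Q$ is null and every subset has outer measure $0$, so it suffices to partition the infinite set $Q$ into infinitely many pairwise disjoint nonempty pieces, which is trivial. If $q=\infty$, I would decompose the hull as $H=\bigsqcup_{k}H_k$ into countably many pairwise disjoint measurable sets of infinite measure; fullness of $Q$ in $H$ then gives $\lambda^*(Q\cap H_k)=\lambda(H_k)=\infty$ for each $k$, so the disjoint sets $Q\cap H_k$ already furnish the required subsets, with no further work. The substantive case is $0<q<\infty$.

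For $0<q<\infty$ I would run a Bernstein-type transfinite recursion. Enumerate the compact subsets of $H$ of positive measure as $\{K_\xi:\xi<\mathfrak c\}$. Since $Q$ is full in $H$, each trace satisfies $\lambda^*(Q\cap K_\xi)=\lambda(K_\xi)>0$, so $Q\cap K_\xi$ is non-null, hence of cardinality $\mathfrak c$. Recursing on $\xi$, at each stage fewer than $\mathfrak c$ points have been committed, so I can select a countable set of fresh distinct points $\{x_\xi^{(n)}:n\in\mathbb N\}\subseteq Q\cap K_\xi$. Setting $A_n=\{x_\xi^{(n)}:\xi<\mathfrak c\}$ yields pairwise disjoint subsets of $Q$, and by construction each $A_n$ meets every $K_\xi$, hence is full in $H$, hence satisfies $\lambda^*(A_n)=q=\lambda^*(Q)$. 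This produces the desired infinite family.

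The main obstacle is the step ensuring the recursion never stalls, namely that each trace $Q\cap K_\xi$ is large enough (of cardinality $\mathfrak c$) to supply fresh points at every stage. This is precisely where the genuine measure-theoretic input enters: regularity of Lebesgue measure, to reduce fullness to compact witnesses, together with the fact that a set of positive outer measure has the power of the continuum, which is the only nontrivial use of the fullness of $Q$. A secondary but essential point to verify carefully is the equivalence between \emph{having full outer measure in $H$} and \emph{meeting every compact positive-measure subset of $H$}, since that equivalence underpins both the reduction at the start and the verification of $\lambda^*(A_n)=q$ at the end.
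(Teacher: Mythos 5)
Your reduction of the theorem --- the hull $H$, the equivalence between $\lambda^*(A)=q$ and ``$A$ meets every compact $K\subseteq H$ of positive measure'' (valid since $q<\infty$ there), and the two degenerate cases $q=0$ and $q=\infty$ --- is correct. The gap is the step you yourself single out as the crux: ``a set of positive outer measure has the power of the continuum.'' This is not a theorem of ZFC; it is the cardinal-characteristic assertion $\operatorname{non}(\mathcal{N})=\mathfrak{c}$ for the null ideal $\mathcal{N}$, and it fails, for instance, in the random real model, where $\mathfrak{c}=\aleph_2$ but there is a set $Q\subseteq[0,1]$ of cardinality $\aleph_1$ with $\lambda^*(Q)=1$. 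For such a $Q$ your recursion cannot run its course: the fresh points $x^{(n)}_{\xi}$ are pairwise distinct over all $n$ and $\xi$, so already $A_0=\{x^{(0)}_{\xi}:\xi<\mathfrak{c}\}$ would have cardinality $\mathfrak{c}$, i.e.\ the construction tacitly presupposes $\mathrm{card}(Q)\ge\mathfrak{c}$; equivalently, the recursion stalls at some stage $\xi<\mathfrak{c}$ once the committed points have exhausted a trace $Q\cap K_{\xi}$. Since Sierpi\'nski's statement is a ZFC theorem about arbitrary infinite $Q$, your argument proves it only under an extra hypothesis such as CH or MA (either of which gives $\operatorname{non}(\mathcal{N})=\mathfrak{c}$), not in general.

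It is worth seeing where this strategy does work: it is precisely the Bernstein-type construction the paper uses for its Marczewski variant (Theorem 2.15). There the engine ``abundant trace $\Rightarrow$ cardinality $\mathfrak{c}$'' is a ZFC fact, because every perfect set splits into $\mathfrak{c}$ disjoint perfect subsets, so every set of cardinality less than $\mathfrak{c}$ is $s_0$; for the null ideal the analogous statement is exactly the independent one above. The ZFC proofs of Theorem 1.1 --- Sierpi\'nski's own, and Grzegorek--Labuda's derivation from ccc plus the non-separation property --- replace your cardinality count by the Luzin--Sierpi\'nski non-separation theorem: every set of positive outer measure contains two disjoint subsets that cannot be separated by measurable sets. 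Its proof well-orders a non-null subset of $Q$ of \emph{least possible} cardinality, so that all proper initial segments are null, and splits along that well-order; the transfinite bookkeeping is thereby calibrated to $\mathrm{card}(Q)$ rather than to $\mathfrak{c}$, which is exactly the calibration your recursion lacks. (The paper's other route, Theorem 2.11 with assumptions (i)--(v), would not rescue you either: specialized to the measure base, which is ccc, its assumption (iv) forces $\mathfrak{c}=\aleph_1$.) To repair your proof you would need to substitute such a device for the appeal to $\operatorname{non}(\mathcal{N})=\mathfrak{c}$; your hull-and-compact-witness reduction can then be kept as is.
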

\begin{theorem} [\cite{10}, Supplement]
 Let Q be a subset of an interval J. If Q is everywhere 2nd category in J, then it is the union of an infinte number of disjoint subsets, each of which is everywhere 2nd category in J.
 
 (Here, Q is everywhere 2nd category in J means Q is 2nd category at each point x $\in$ J. In otherwords, for each neighbourhood V of x, the intersection V$\cap$Q is a set of 2nd category.)
\end{theorem}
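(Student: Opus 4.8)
The plan is to turn the single global hypothesis into countably many local ones and then to manufacture the pieces by a transfinite, Bernstein-type selection. First I would fix a countable base $\{V_k\}_{k\in\mathbb N}$ of nonempty open subintervals of $J$. Since every neighbourhood of every point contains some $V_k$, the assumption that $Q$ is everywhere 2nd category in $J$ is equivalent to the countably many conditions that $Q\cap V_k$ is of 2nd category for each $k$; and for the same reason a subset $P\subseteq Q$ is everywhere 2nd category exactly when $P\cap V_k$ is of 2nd category for every $k$. To check that last condition it is enough to know that $P\cap V_k$ is contained in no meager $F_\sigma$ subset of $J$, because every meager set lies inside a meager $F_\sigma$ one. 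Thus the target is to construct pairwise disjoint sets $Q_1,Q_2,\dots\subseteq Q$ with $\bigcup_n Q_n=Q$ such that, for every $n$ and every $k$, the set $Q_n\cap V_k$ escapes each meager $F_\sigma$ set.

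The core is a diagonalization of length continuum. I would list in a transfinite sequence $\langle(k_\xi,M_\xi,n_\xi):\xi<\mathfrak c\rangle$ all triples made up of a basic region $V_k$, a meager $F_\sigma$ set $M$ (of which there are exactly $\mathfrak c$), and a label $n\in\mathbb N$, arranging that every triple occurs cofinally. At stage $\xi$ I would choose a point $x_\xi\in Q\cap(V_{k_\xi}\setminus M_\xi)$ different from all points chosen earlier and place it in the piece $Q_{n_\xi}$. Since each triple $(k,M,n)$ is eventually met, the resulting set $Q_n$ satisfies $Q_n\cap(V_k\setminus M)\neq\emptyset$ for every $k$ and every meager $F_\sigma$ set $M$; hence $Q_n\cap V_k$ is contained in no meager $F_\sigma$ set and is therefore of 2nd category, so each $Q_n$ is everywhere 2nd category. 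Finally I would dump every point of $Q$ not selected during the recursion into $Q_1$: this keeps the $Q_n$ pairwise disjoint with union $Q$ and only enlarges $Q_1$, so it stays everywhere 2nd category.

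The step that carries the whole argument, and the one I expect to be the real obstacle, is the availability of the point $x_\xi$ at every stage: I must guarantee that $Q\cap(V_{k_\xi}\setminus M_\xi)$ contains a point outside the fewer-than-$\mathfrak c$ points already used. Since $M_\xi$ is meager and $Q\cap V_{k_\xi}$ is of 2nd category, the difference $Q\cap(V_{k_\xi}\setminus M_\xi)$ is again of 2nd category and in particular nonempty; what is actually required, however, is that it have cardinality $\mathfrak c$, so that fresh points never run out. This is precisely where the perfectness and point-meagerness of the base are indispensable: in a perfect base each region carries a Cantor scheme of subregions, and I would exploit it to prove a ``fullness'' lemma asserting that a set abundant in a region meets the complement of any meager set in a set of full cardinality. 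Proving this lemma in the abstract language of category bases, rather than on the line where density and Baire-property arguments are directly available, is the genuine difficulty, and it is the point at which the hypotheses named in the statement (point meager Baire base, perfect base, full subset) do the work.

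An equivalent and perhaps more transparent route, which I would keep in reserve, is to prove first the two-piece version — that an everywhere abundant set splits into two everywhere abundant sets, for instance by a Banach-category ``no concentration'' argument locating disjoint subregions in which both parts stay abundant — and then to iterate it countably often, at each stage peeling off one everywhere-abundant piece while leaving an everywhere-abundant remainder. The decomposition into infinitely many pieces then follows by collecting the peeled-off sets and absorbing the final remainder into the first piece. Either way the same fullness lemma is what makes each individual split possible, so the essential mathematical content is concentrated in that one lemma.
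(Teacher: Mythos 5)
You have correctly located the critical step, and unfortunately the gap there is fatal: the lemma you hope for --- that a set abundant in an interval meets the complement of every meager set in a set of cardinality $\mathfrak{c}$ --- is not provable in ZFC, and is in fact \emph{consistently false}, in the setting of this statement. Theorem 1.2 concerns ordinary Baire category in an interval $J$, i.e.\ the ccc category base of Example 2.2, not a perfect base; in a ccc base a region does not split into $\mathfrak{c}$ pairwise disjoint subregions, so no Cantor-scheme argument can convert abundance of an \emph{arbitrary} set into cardinality. Concretely, in any model with a Luzin set $L$ of cardinality $\aleph_1<\mathfrak{c}$ (for instance, after adding $\aleph_2$ Cohen reals to a model of CH, the first $\aleph_1$ Cohen reals form such a set), put $W=\bigcup\{I: I\ \text{a rational interval with}\ L\cap I\ \text{countable}\}$ and take $J$ to be any interval disjoint from $W$ (one exists, else $L$ would be countable). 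Then $Q=L\cap J$ is everywhere of 2nd category in $J$ (every portion of $Q$ is an uncountable subset of a Luzin set, hence nonmeager), yet $\mathrm{card}(Q)=\aleph_1<\mathfrak{c}$. Your recursion must consume $\mathfrak{c}$ distinct points of $Q$ (one per triple $(k,M,n)$, and there are $\mathfrak{c}$ meager $F_\sigma$ sets), so it cannot run for this $Q$; your fallback two-piece-then-iterate route stalls at exactly the same place, as you yourself observe. What your argument actually proves is the theorem under the extra hypothesis that every nonmeager set of reals has cardinality $\mathfrak{c}$ (true under CH or MA), whereas the statement is a ZFC theorem: the ZFC proof, which is the route of Grzegorek and Labuda recounted in Section 1, uses no cardinality counting at all, relying instead on ccc together with the Luzin--Novikov non-separation property (every nonmeager set contains two disjoint subsets that cannot be separated by sets with the Baire property).

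For comparison: the paper itself never reproves Theorem 1.2 in ZFC either. Its generalization (Theorem 2.11 with Corollary 2.16) establishes $\mathrm{card}(Y(H))=\mathrm{card}(X)$ --- the very cardinality fact you need --- only by invoking assumptions (iii) and (iv), which for an interval with its topology amount precisely to CH; and its Theorem 2.14 carries out essentially your Bernstein-type diagonalization, but in the Marczewski base, where the construction is legitimate in ZFC because every perfect set does contain $\mathfrak{c}$ pairwise disjoint perfect subsets, so Marczewski-abundant sets provably have cardinality $\mathfrak{c}$. That is the dividing line your proposal misses: the fullness/cardinality lemma is a theorem for perfect bases, but independent of ZFC for the ccc base in which Theorem 1.2 lives, so no argument routed through it can prove the stated theorem outright.
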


To give a common abstraction of the above two theorems, Grzegorek and Labuda chose a triple($X,\mathscr{A}$,$\mathscr{I}$) as their basic framework, where $X$ is a nonempty set, $\mathscr{A}$ is a $\sigma$-algebra of its subsets (which are called measurable) and $\mathscr{I}$ is a $\sigma$-ideal of its subsets (which are called negligible) such that $\mathscr{I} \subset \mathscr{A}$. In this frameworks, given two subsets $E$ and $Q$ of $X$ with $E\subset$Q, they defined the notion ``$E$ is a full subset of $Q$" with respect to ($\mathscr{A}$,$\mathscr{I}$) which is a reformation of an earlier notion  of `same size'. used in [2]. Following [14] Grzegorek and Labuda also defined ($\mathscr{A}$,$\mathscr{I}$)- complete nonmeasurability (in $Q$) of a subset $E$ of $Q$ and proved (Proposition 1.4,[3]) that for $ E \subset Q$, $E$ is ($\mathscr{A}$,$\mathscr{I}$)- completely non-measurable in $Q$ iff $E$ and $Q\backslash$E are full subsets of $Q.$

We end this section by stating the two assumptions on ($X,\mathscr{A}$,$\mathscr{I}$) by Grzegorek and Labuda and also their main theorem  as stated in [3] which motivated us to work on this article.
\begin{itemize}
    \item The family $\mathscr{A}$$\backslash$$\mathscr{I}$ satisfies the countable chain condition(ccc), that is, every disjoint family of non-negligible measurable sets is countable. It is well-known and easily shown that under ccc each subsets of $X$ admits an envelope.

    \item $X$ has the non-separation property with respect to $\mathscr{A}$. Namely, if $E\notin \mathscr{I}$, then there exist disjoint subsets of $E$ which cannot be separated by sets from $\mathscr{A}$ (in particular, the singletons must belong to $\mathscr{I}$)
\end{itemize}
\begin{theorem}
Each infinite subset $E$ of $X$ decomposes into two disjoint full subsets. Consequently, it can be written as the infinte disjoint union of its full (equivalently-completely non- measurable) subsets.
\end{theorem}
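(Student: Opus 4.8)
The plan is to isolate the combinatorial core of fullness and then build the required partition by transfinite recursion, invoking the two standing hypotheses exactly where they are needed. First I would dispose of the degenerate case: if $E\in\mathscr{I}$ then its envelope is negligible, so every subset of $E$ meets no non-negligible measurable set and is vacuously full; any partition of the infinite set $E$ into two infinite pieces then works, and a splitting into countably many infinite pieces settles the ``consequently'' clause. So the substantive case is $E\notin\mathscr{I}$, and here I would first record the working characterization of fullness. Fix a measurable envelope $\widehat{E}$ of $E$, available by ccc. For $A\subseteq E$ I claim the equivalence of: $A$ is full in $E$; $A$ and $E$ share the envelope $\widehat{E}$ modulo $\mathscr{I}$; and $A$ meets every measurable $N\subseteq\widehat{E}$ with $N\notin\mathscr{I}$. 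The only point needing the envelope's minimality is that a non-negligible measurable $N\subseteq\widehat{E}$ disjoint from $A$ is precisely a witness for $\widehat{E}\setminus\operatorname{env}(A)\notin\mathscr{I}$. By Proposition 1.4 the target decomposition $E=A_0\sqcup A_1$ into two full subsets is the same as asking $A_0$ to be $(\mathscr{A},\mathscr{I})$-completely non-measurable in $E$, so it suffices to produce a partition $E=A_0\sqcup A_1$ in which both blocks meet every non-negligible measurable subset of $\widehat{E}$.

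Next I would construct such a partition by a Bernstein-type recursion. Enumerate the non-negligible measurable subsets of $\widehat{E}$ (or, more economically, a $\pi$-base of $\mathscr{A}/\mathscr{I}$ below $[\widehat{E}]$) as $\{N_\xi:\xi<\kappa\}$. At stage $\xi$ I would choose two distinct points $p_\xi,q_\xi\in N_\xi\cap E$ not already committed, placing $p_\xi$ into $A_0$ and $q_\xi$ into $A_1$, and afterward distribute the remaining points of $E$ arbitrarily. Since every non-negligible measurable $N\subseteq\widehat{E}$ has $N\cap E\notin\mathscr{I}$ (again by minimality of the envelope), each trace $N_\xi\cap E$ is non-negligible, so the construction is meaningful as long as fresh points remain; both blocks then meet every $N_\xi$, and through the $\pi$-base this propagates to every non-negligible measurable subset of $\widehat{E}$.

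The crux, and the step where the non-separation hypothesis is indispensable, is guaranteeing that the recursion never stalls: at stage $\xi$ the trace $N_\xi\cap E$ must still contain unused points. Here ccc and non-separation must cooperate. I would take $\kappa$ to be the $\pi$-character of $\mathscr{A}/\mathscr{I}$ below $[\widehat{E}]$ and use the non-separation property to show that the trace of $E$ on every non-negligible measurable set is too rich to be exhausted before stage $\xi$, the negligibility of small committed sets (singletons lie in $\mathscr{I}$, and $\mathscr{I}$ is a $\sigma$-ideal) keeping each residual trace non-negligible and hence nonempty. I expect this bookkeeping---reconciling the length of the recursion with the additivity furnished by ccc and non-separation---to be the main technical obstacle; it is also what explains why the statement fails for, say, $\mathscr{A}=\mathcal{P}(X)$, where no set admits two disjoint non-separable subsets and the decomposition cannot exist.

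Finally I would upgrade the two-block splitting to the countable decomposition by iteration, using transitivity of fullness. From the characterization, if $B$ is full in $E$ and $A$ is full in $B$ then $A$ is full in $E$, since $B$ full in $E$ forces $\widehat{B}=\widehat{E}$ modulo $\mathscr{I}$, so the two families of non-negligible measurable subsets coincide. Thus I would split $E=C_1\sqcup D_1$ with both full, then $D_1=C_2\sqcup D_2$ with both full in $D_1$ and hence in $E$, and so on; the blocks $C_1,C_2,\dots$ are pairwise disjoint and full, and since $E\setminus\bigsqcup_{k\le n}C_k=D_n$ the leftover is $\bigcap_n D_n$, which I fold into $C_1$ (a superset of a full set is again full). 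This exhibits $E$ as an infinite disjoint union of full---equivalently completely non-measurable---subsets, as required.
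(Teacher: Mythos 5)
Your preparatory work is sound: the reduction to the non-degenerate case $E\notin\mathscr{I}$, the characterization of fullness of $A\subseteq E$ as ``$A$ meets every non-negligible measurable $N\subseteq\widehat{E}$'' (this is a correct consequence of the envelope's minimality), and the transitivity-plus-folding argument that upgrades a two-block splitting to a countable one are all correct. But the heart of your proof --- the Bernstein-type transfinite recursion --- is precisely where the argument has a genuine gap, and you concede as much when you defer the ``bookkeeping'' as ``the main technical obstacle.'' The hypotheses actually available (ccc and the non-separation property) give no relation at all between the length $\kappa$ of your enumeration (a $\pi$-base of $\mathscr{A}/\mathscr{I}$ below $[\widehat{E}]$ need not have small cardinality, nor need one exist of size $\leq\mathrm{card}(E)$) and the cardinality of the traces $N_{\xi}\cap E$. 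Non-separation is a statement about pairs of sets that cannot be separated by members of $\mathscr{A}$; its only cardinality consequence is that singletons are negligible, hence that each trace is uncountable, which is far from what the recursion needs. The failure is visible already in the motivating concrete case (Theorem 1.1, outer Lebesgue measure): a non-null set $E$ need only have the least cardinality of a non-null set, which is consistently smaller than $2^{\omega}$, while every $\pi$-base of positive-measure sets below $\widehat{E}$ has cardinality $2^{\omega}$; so a two-points-per-stage recursion of length $2^{\omega}$ can exhaust a trace and stall. Since Sierpi\'{n}ski's theorem is a theorem of ZFC, the point-by-point Bernstein scheme cannot even reprove the classical special case, let alone the abstract one.

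Note also that this paper never proves Theorem 1.3 itself --- it is quoted from [3] --- so the fair comparison is with the paper's generalizations, and these corroborate the diagnosis. When the authors do run a recursion/combinatorial argument of your type in category bases (Theorem 2.11), they must impose the extra assumptions (i)--(v) (regularity of $\mathrm{card}(X)$, a $\pi$-base of size at most $\mathrm{card}(X)$, $\mathrm{card}(X)$-additivity of $\mathscr{M}(\mathscr{C})$, the saturation condition, existence of hulls) exactly to force the cardinality coincidences you are missing, and they work at the level of sets, via the hull and the splitting Proposition 2.12 applied to the family $\lbrace Y(H):H\in\mathscr{H}\rbrace$, not by choosing points. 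Their genuinely Bernstein-type construction (Theorem 2.15) is confined to the Marczewski base, where the two needed coincidences do hold: there are $2^{\omega}$ perfect sets and every perfect set has $2^{\omega}$ points. Likewise, the proof in [3] uses the non-separation hypothesis directly at the level of sets (non-separated pairs inside measurable sets, combined by means of ccc and envelopes), not through any alleged richness of traces. To repair your argument you would have to abandon the point recursion in favour of a set-level argument of that kind; as written, the decisive step is missing and cannot be supplied from ccc and non-separation alone.
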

The present paper extends Theorem 1.3 in the more general settings of category bases along with a variant formulation of the same in Marczewski structure. Here we find that the axiomatic foundation of category base based on the notion of region requires no assumptions of the above type.
\section{PRELIMINARIES AND RESULTS}
Category base is an abstract theory of point sets which unifies within a common framework substantial number of results concerning Lebesgue measure, Baire property and also some other aspects of point set classification. It was introduced by J.C Morgan II [8](see also [6]) and has been developed since then through a series of papers [7],[9],[12],[13] etc.
\begin{definition}[\cite{6}] A pair ($X,\mathscr{C}$)  where $X$ is a nonempty set and $\mathscr{C}$ is a family of subsets of $X$ is called a category base if the nonempty members of $\mathscr{C}$ called regions satisfy the following axioms:

\begin{enumerate}
\item Every point of $X$  is contained in at least one region; i.e, $X =\cup\mathscr{C}$
\item Let $A$ be a region  and  $\mathscr{D}$ be a non-empty family of disjoint regions having cardinality less than the cardinality of $\mathscr{C}$
\begin{enumerate}[label=\roman*]

    \item If $A\cap(\cup\mathscr{D})$ contains a region, then there exists a region    $D\in\mathscr{C}$ such that $A \cap D$ contains a region.
    \item If $A\cap(\cup\mathscr{D})$ contains no region, then there exist a  region $B\subset A$ which is disjoint from every region in $\mathscr{D}$. 
\end{enumerate}
\end{enumerate}
\end{definition}
Several examples of category bases may be found in the well-known monograph of Morgan[6] and also in the references stated above. In connection with the present work, we like  to highlight the following few.

\begin{example} 
$X=\mathbb{R}^n$ and $\mathscr{C}$ is the usual topology on $X$.
\end{example}
\begin{example}$ X=\mathbb{R}^n$ and $\mathscr{C}$ is the family of compact subsets of $X$ of positive Lebesgue measure.
\end{example}
\begin{example}
    
 $X=\mathbb{R}^n$ and $\mathscr{C}$ is family of all perfect subsets of $X$. This forms a perfect base or the Marczewski base.
\end{example}
\begin{example}$X\neq\phi$ and
$\mathscr{C}=\mathscr{A}\backslash\mathscr{I}$ where $\mathscr{A}$ is a $\sigma$-algebra and $\mathscr{I}(\subset\mathscr{A})$ is a $\sigma$-ideal of subsets of $X$ such that $\mathscr{A}\backslash\mathscr{I}$ satisfies countably chain condition.
\end{example}
\begin{definition}
     In a category base (X,$\mathscr{C}$), a set is called 
\begin{enumerate}[label=(\roman*)]
    \item singular,  if every region contains a subregion disjoint from the set itself;
    \item meager,if it can be expressed as a countable union of singular sets. A category base in which every singleton set is meager is called a `point-meager base';
    \item abundant, if it is not meager. A category base in which every region is abundant is called a `Baire base';
    \item a Baire set, if every region contains a subregion in which either the set or its complement is meager.
\end{enumerate}
\end{definition}
 The class of all meager (resp, Baire) sets is  here denoted by the symbol $\mathscr{M}(\mathscr{C}) $(resp$\mathscr{B}(\mathscr{C})$).
 We further add that in a category base ($X,\mathscr{C})$,

 \begin{definition}[\cite{4}] A subfamily $\mathscr{C'}$ of $\mathscr{C}$ is called a $\pi$-base if every region $A$ $\in\mathscr{C}$ contains a subregion $B$ $\in\mathscr{C'}$.
\end{definition}
\begin{definition}[\cite{1}]
    
A set $F$ is  a full subset of $E$ if $F\subset E$ and for every abundant Baire set $B$, $B\cap F$ is abundant whenever $B\cap E$ is so. If $E$ is a Baire set, this is equivalent to the assertion that $E\backslash F$ cannot contain any abundant Baire set. In this situation, we call $E$ the `Hull of $F$' and it is unique in the sense that if $E_1$ and $E_2$ are any two hull of $F$, then $E_1\triangle E_2 \in \mathscr{M}(\mathscr{C})$.
\end{definition}
 It may be noted here that the triple($X,\mathscr{A}$,$\mathscr{I})$ used by Grzegorek and Labuda gives rise to a category base (Example 2.5) because according to their first assumption $\mathscr{C}$=$\mathscr{A}\backslash\mathscr{I}$ satisfies countable chain condition whereas, on the otherhand if ($X,\mathscr{C}$) is a category base where $\mathscr{C}$ satisfies countable chain condition, then it may be associated with it the triple ($X,\mathscr{B}(\mathscr{C}),\mathscr{M}(\mathscr{C})$) where $\mathscr{B}(\mathscr{C}) \backslash$$\mathscr{M}(\mathscr{C})$ satisfies countable chain condition(Theorem 5,III,Ch 1, [6]). Thus category base presents a more basic and wider platform on which to generalize the theorem of Grzegorek and Labuda.

To establish our main results, we make the following assumptions:
\begin{enumerate}[label=(\roman*)]
    \item card(X) is a regular cardinal.
    \item Our category base ($X,\mathscr{C}$) possesses a $\pi$-base $\mathscr{C'}$
    with card($\mathscr{C'}$) not exceeding card($X$).
    \item $\mathscr{M}(\mathscr{C})$ is card($X$)-additive which means 
    that $\cup\mathscr{E}\in\mathscr{M}(\mathscr{C})$ whenever $\mathscr{E}\subset\mathscr{M}(\mathscr{C})$ and card($\mathscr{E}$)$<$card($X$).
    \item card($X$)=min$\lbrace\rchi : \mathscr{C}$ is $\rchi$-saturated$\rbrace$ where by the phrase ``$\mathscr{C}$ is $\rchi$-saturated" we mean that if $\mathscr{E}\subset\mathscr{C}$ such that card($\mathscr{E})=\rchi$, then there are at least two distinct members $E$, $F(\in\mathscr{E}$) such that $E\cap F\neq\phi$.
    \item Every abundant set in ($X,\mathscr{C}$) possesses a hull.
\end{enumerate}

Let $\mathscr{K}$ denote the family of all sets whose complements are members of $\mathscr{C'}$. Let $\bigcap\limits_{<card(X)}\mathscr{K}$: all sets representable as intersection of subfamilies of $\mathscr{K}$ whose cardinality is less than card($X$).
\begin{theorem}
Any singular set in ($X,\mathscr{C})$ is contained in a singular set which belongs to the family $\bigcap\limits_{<card(X)}\mathscr{K}$. Any  set in $\mathscr{M}(\mathscr{C})$ is  a subset of some set in $\mathscr{M}(\mathscr{C})\bigcap(\bigcup\limits_{\sigma}\bigcap\limits_{<card(X)}\mathscr{K})$.
\end{theorem}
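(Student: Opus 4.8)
The plan is to establish the first assertion directly by a maximal–disjoint–family construction, and then to read off the second assertion by a routine countable union. Fix a singular set $S$ and consider the subfamily $\mathscr{C'}_{S}=\{B\in\mathscr{C'}:B\cap S=\emptyset\}$ consisting of those $\pi$-base regions that miss $S$. Using Zorn's Lemma I would select a maximal pairwise disjoint subfamily $\mathscr{D}\subseteq\mathscr{C'}_{S}$. Since $\mathscr{D}$ is a disjoint subfamily of $\mathscr{C}$ and, by assumption (iv), $\mathscr{C}$ is $\mathrm{card}(X)$-saturated, every pairwise disjoint subfamily of $\mathscr{C}$ has cardinality strictly less than $\mathrm{card}(X)$; hence $\mathrm{card}(\mathscr{D})<\mathrm{card}(X)$. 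I then set $T=X\setminus\bigcup\mathscr{D}=\bigcap_{D\in\mathscr{D}}(X\setminus D)$. As each $D\in\mathscr{D}\subseteq\mathscr{C'}$ has complement in $\mathscr{K}$, and the index set $\mathscr{D}$ has cardinality $<\mathrm{card}(X)$, this realizes $T$ as a member of $\bigcap\limits_{<\mathrm{card}(X)}\mathscr{K}$; moreover $S\subseteq T$, because every $D\in\mathscr{D}$ is disjoint from $S$.

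The substance of the argument is to verify that $T$ is singular, that is, that every region $A$ contains a subregion disjoint from $T$, equivalently a subregion contained in $\bigcup\mathscr{D}$. Here I would apply axiom (2) to the region $A$ and the disjoint family $\mathscr{D}$ (whose cardinality is $<\mathrm{card}(X)\le\mathrm{card}(\mathscr{C})$). If alternative (i) holds, then $A\cap(\bigcup\mathscr{D})$ contains a region, so some $A\cap D$ with $D\in\mathscr{D}$ contains a region, and that region lies in $A$ and in $\bigcup\mathscr{D}$, exactly as wanted. The task is therefore to exclude alternative (ii). Suppose for contradiction that there is a region $B\subseteq A$ disjoint from every member of $\mathscr{D}$. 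Since $S$ is singular, $B$ contains a subregion $B_{1}$ disjoint from $S$, and since $\mathscr{C'}$ is a $\pi$-base, $B_{1}$ contains a subregion $B_{2}\in\mathscr{C'}$. Then $B_{2}\cap S=\emptyset$, so $B_{2}\in\mathscr{C'}_{S}$, while $B_{2}\subseteq B$ keeps $B_{2}$ disjoint from every member of $\mathscr{D}$; thus $\mathscr{D}\cup\{B_{2}\}$ is a pairwise disjoint subfamily of $\mathscr{C'}_{S}$ properly larger than $\mathscr{D}$, contradicting maximality. Hence alternative (ii) cannot occur, alternative (i) always holds, and $T$ is singular. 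This dichotomy, where singularity, the $\pi$-base property and saturation must be made to interact so that the bad alternative collides with maximality, is the step I expect to be the main obstacle.

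For the second assertion, let $M\in\mathscr{M}(\mathscr{C})$ and write $M=\bigcup_{n}S_{n}$ with each $S_{n}$ singular. Applying the first part to each $S_{n}$ produces singular sets $T_{n}\in\bigcap\limits_{<\mathrm{card}(X)}\mathscr{K}$ with $S_{n}\subseteq T_{n}$. Putting $T=\bigcup_{n}T_{n}$, I have $M\subseteq T$; being a countable union of singular sets, $T$ is meager, so $T\in\mathscr{M}(\mathscr{C})$; and being a countable union of members of $\bigcap\limits_{<\mathrm{card}(X)}\mathscr{K}$, it lies in $\bigcup\limits_{\sigma}\bigcap\limits_{<\mathrm{card}(X)}\mathscr{K}$. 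Therefore $M\subseteq T\in\mathscr{M}(\mathscr{C})\cap\big(\bigcup\limits_{\sigma}\bigcap\limits_{<\mathrm{card}(X)}\mathscr{K}\big)$, which is precisely the required conclusion.
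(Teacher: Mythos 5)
Your construction is, in outline, the paper's own: the paper too passes to the family of $\pi$-base regions disjoint from the given singular set, extracts from it a disjoint subfamily whose union has singular complement, and then uses the saturation assumption (iv), exactly as you do, to place that complement in $\bigcap_{<\mathrm{card}(X)}\mathscr{K}$ (the meager case is the same routine countable-union step in both). The difference lies in how the disjoint family is produced and how singularity of the complement is certified, and it is precisely there that your argument has a gap. To apply axiom (2) to the pair $(A,\mathscr{D})$ you need $\mathrm{card}(\mathscr{D})<\mathrm{card}(\mathscr{C})$, and you get this from the asserted inequality $\mathrm{card}(X)\le\mathrm{card}(\mathscr{C})$, which is not among the paper's hypotheses and does not follow from them. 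What (iv) actually gives is: every disjoint subfamily of $\mathscr{C}$ has cardinality $<\mathrm{card}(X)$, and for every $\chi<\mathrm{card}(X)$ there is a disjoint subfamily of cardinality $\chi$, whence $\mathrm{card}(\mathscr{C})\ge\chi$ for all $\chi<\mathrm{card}(X)$. If $\mathrm{card}(X)$ is a limit cardinal this does yield your inequality; but if $\mathrm{card}(X)=\kappa^{+}$ it yields only $\mathrm{card}(\mathscr{C})\ge\kappa$, and nothing rules out $\mathrm{card}(\mathscr{D})=\kappa=\mathrm{card}(\mathscr{C})$. This case is not vacuous: a partition of a set of cardinality $\kappa^{+}$ into $\kappa$ pieces is a category base satisfying (i)--(iv) (with $\mathscr{C'}=\mathscr{C}$), and there your maximal family $\mathscr{D}$ can be all of $\mathscr{C}$, so the axiom as stated simply does not apply to it.

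The repair is to replace Zorn's lemma by the transfinite recursion that the paper gets packaged in Lemma 4, II, Ch 1 of [6]. Well-order $\mathscr{C'}_{S}=\{B_{\alpha}:\alpha<\lambda\}$, noting $\lambda\le\mathrm{card}(\mathscr{C})$. At stage $\alpha$, if $B_{\alpha}$ meets the union of the regions selected so far in a region, do nothing; otherwise apply axiom (2)(ii) --- now legitimately, since the family selected so far has cardinality at most $|\alpha|<\lambda\le\mathrm{card}(\mathscr{C})$ --- to obtain a region inside $B_{\alpha}$ disjoint from all selected regions, shrink it by the $\pi$-base property to a member of $\mathscr{C'}$ (which automatically lies in $\mathscr{C'}_{S}$), and select that. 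The resulting disjoint family $\mathscr{D}\subseteq\mathscr{C'}_{S}$ has the covering property built in: every $B_{\alpha}$ meets $\bigcup\mathscr{D}$ in a region. Singularity of $T=X\setminus\bigcup\mathscr{D}$ then follows with no further appeal to the axiom: any region $A$ contains (by singularity of $S$ plus the $\pi$-base property) some $B_{\alpha}$, which in turn contains a region inside $\bigcup\mathscr{D}$, hence disjoint from $T$. Everything else in your proposal --- the saturation bound $\mathrm{card}(\mathscr{D})<\mathrm{card}(X)$, the membership $T\in\bigcap_{<\mathrm{card}(X)}\mathscr{K}$, the inclusion $S\subseteq T$, and the countable-union argument for meager sets --- goes through unchanged, and with this modification your proof coincides with the paper's.
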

\begin{proof}
Let $A$ be a singular set in ($X,\mathscr{C})$. Since $\mathscr{C'}$
is a $\pi$-base, for every $C\in\mathscr{C}$, there is a set $D\in\mathscr{C'}$ such that $D\subset$C and $D$ is disjoint from $A$. This constitutes a subfamily $\mathscr{N}$ of $\mathscr{C}$ satisfying the above property such that every member of $\mathscr{C}$ contains a member of $\mathscr{N}$. Let $Y=\cup\mathscr{N}$. Then ($Y,\mathscr{N}$) is a category base and by lemma 4,II,Ch 1,[6] a subfamily $\mathscr{M}$ of $\mathscr{N}$  consisting of mutually disjoint sets can be selected such that for every $N\in\mathscr{N}$, there exists $M\in\mathscr{M}$ such that $N\cap$M contains a region. This makes $Y\backslash$($\cup\mathscr{M}$) singular in ($Y,\mathscr{N}$) and consequently by Lemmma 3,II,Ch 1, [6], $X\backslash(\cup\mathscr{M})$ is singular in ($X,\mathscr{C}$). Moreover, $A\subset X\backslash(\cup\mathscr{M})$.

 Now by our assumption (iv) $X\backslash(\cup\mathscr{M})$ belongs to the family
 $\bigcap\limits_{<card(X)}\mathscr{K}$ which shows that A is contained in a singular set which belongs to the family $\bigcap\limits_{<card(X)}\mathscr{K}$. Consequently, any member of $\mathscr{M}(\mathscr{C})$ is a subset of some set in  $\mathscr{M}(\mathscr{C})\bigcap(\bigcup\limits_{\sigma}\bigcap\limits_{<card(X)}\mathscr{K})$.
\end{proof}
 It may be noted on the basis of assumptions (i) and (ii), that
 
 card( $\mathscr{M}(\mathscr{C})\bigcap(\bigcup\limits_{\sigma}\bigcap\limits_{<card(X)}\mathscr{K})$) does not exceed card($X$).

\begin{theorem}
 Any abundant set in ($X,\mathscr{C}$) contains a set of the form $F\backslash G$ where $F\in\mathscr{C}$  and $G\in$ $\mathscr{M}(\mathscr{C})\bigcap(\bigcup\limits_{\sigma}\bigcap\limits_{<card(X)}\mathscr{K})$.
\end{theorem}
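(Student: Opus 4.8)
The plan is to reduce the assertion to the covering result for meager sets proved just above. The key observation is that producing a region $F\in\mathscr{C}$ and a set $G\in\mathscr{M}(\mathscr{C})\cap(\bigcup\limits_{\sigma}\bigcap\limits_{<card(X)}\mathscr{K})$ with $F\setminus G\subseteq A$ is the same as producing a single region $F$ on which $A$ is co-meager, i.e. for which $F\setminus A$ is meager. Indeed, if $F\setminus A\in\mathscr{M}(\mathscr{C})$, then by the preceding theorem there is a set $G\in\mathscr{M}(\mathscr{C})\cap(\bigcup\limits_{\sigma}\bigcap\limits_{<card(X)}\mathscr{K})$ with $F\setminus A\subseteq G$, and then $F\setminus G\subseteq F\cap A\subseteq A$, which is exactly the desired inclusion. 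Hence the entire task is to exhibit one such region $F$.

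To locate $F$ I would pass to the hull of $A$. By assumption (v), $A$ has a hull $H$; thus $H$ is a Baire set with $A\subseteq H$ and, by the characterisation of fullness for Baire sets, $H\setminus A$ contains no abundant Baire set. Since $A\subseteq H$ and $A$ is abundant, $H$ is itself abundant, so $H$ is an abundant Baire set. Invoking the structure theory of Baire sets in a category base (cf. \cite{6}), an abundant Baire set is abundant everywhere on some region: there is a region $F$ such that $H\cap B$ is abundant for every subregion $B\subseteq F$, and for a Baire set this forces the complementary trace $F\setminus H$ to be meager. This $F$ is the candidate region.

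It then remains to transfer co-meagerness of $H$ on $F$ to co-meagerness of $A$ on $F$. Writing $F\setminus A=(F\setminus H)\cup\bigl(F\cap(H\setminus A)\bigr)$, the first summand is meager by the choice of $F$, so everything hinges on the second, and showing that $F\cap(H\setminus A)$ is meager is the step I expect to be the main obstacle. The only structural information at hand about the gap $H\setminus A$ is that, by fullness, it carries no abundant Baire set; and in an arbitrary category base this is strictly weaker than being meager. The work therefore lies precisely in upgrading this property to meagerness \emph{using the standing hypotheses}: restricting to the region $F$ on which $H$ is abundant everywhere, I would run the disjoint-family/$\pi$-base machinery exactly as in the proof of the preceding theorem, manufacturing from an abundant Baire-free remnant either a disjoint subfamily of $\mathscr{C'}$ of cardinality $card(X)$ that violates the $card(X)$-saturation in (iv), or a representation of the remnant as a union of fewer than $card(X)$ singular sets, which is meager by the additivity in (iii). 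Granting this, $F\setminus A$ is meager, and the reduction of the first paragraph closes the argument: the preceding theorem yields $G\in\mathscr{M}(\mathscr{C})\cap(\bigcup\limits_{\sigma}\bigcap\limits_{<card(X)}\mathscr{K})$ with $F\setminus A\subseteq G$, and hence $F\setminus G\subseteq A$ with $F\in\mathscr{C}$, as claimed.
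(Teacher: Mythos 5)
Your opening reduction is exactly the paper's: find a region on which the given set is co-meager, cover the meager remainder by a set $G\in\mathscr{M}(\mathscr{C})\bigcap(\bigcup\limits_{\sigma}\bigcap\limits_{<card(X)}\mathscr{K})$ via Theorem 2.9, and pass to a subregion $F\in\mathscr{C'}$ via the $\pi$-base. The divergence, and the genuine gap, lies in how the region is produced. The paper does not go through hulls at all: despite the wording ``any abundant set'', its proof begins ``Let $S\in\mathscr{B}(\mathscr{C})\backslash\mathscr{M}(\mathscr{C})$'', i.e.\ it tacitly proves the statement for abundant \emph{Baire} sets, for which Theorem 2, III, Ch.~1 of \cite{6} immediately supplies a region $C$ with $C\backslash S\in\mathscr{M}(\mathscr{C})$; that is the entire argument, and the Baire case is also the only form in which the theorem is used later (in Theorem 2.11 it is applied to abundant Baire sets contained in $S$).

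You instead try to honor the literal statement for an arbitrary abundant $A$ by taking its hull $H$ and then upgrading ``$H\backslash A$ contains no abundant Baire set'' to ``$F\cap(H\backslash A)$ is meager''. You correctly flag this as the main obstacle, but it is not an obstacle that more machinery can overcome: the upgrade is false, and so is the theorem for general abundant sets. Take $X=\mathbb{R}$ with $\mathscr{C}$ the usual topology (Example 2.2); under CH all of the assumptions (i)--(v) hold for this base. Let $A$ be a Bernstein set. Its complement $\mathbb{R}\backslash A$ is again a Bernstein set, hence contains no abundant Baire set (an abundant set with the Baire property is co-meager in some open set and therefore contains a perfect set, which a Bernstein set cannot), so $H=\mathbb{R}$ is a hull of $A$; yet $\mathbb{R}\backslash A$ is non-meager in every open set, so $F\cap(H\backslash A)$ is abundant for every region $F$. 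Accordingly $A$ itself contains no set of the form $F\backslash G$ with $F$ open and $G$ meager, since any such set contains a perfect set. So the saturation and additivity hypotheses cannot close your gap; the correct repair is the paper's, namely to read (and use) the theorem for abundant Baire sets, where the hull is unnecessary and Morgan's theorem finishes the proof in one line.
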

 \begin{proof} Let $S\in\mathscr{B(\mathscr{C})}\backslash\mathscr{M}(\mathscr{C})$. By Theorem 2, III, Ch 1, [6], there exists $C\in\mathscr{C}$ such that $C\backslash S\in\mathscr{M}(\mathscr{C})$. By Theorem 2.9 again, $C\backslash S\subset G\in$ $\mathscr{M}(\mathscr{C})\bigcap(\bigcup\limits_{\sigma}\bigcap\limits_{<card(X)}\mathscr{K})$. Let $F\in\mathscr{C'}$ such that $F\subset C$. The theorem is now proved by choosing the set $F\backslash G$.
\end{proof}
 \begin{theorem} Let E be an abundant set in a Baire base (X,$\mathscr{C}$) and $E=\bigcup\limits_{t\in T}E_{t}$ is a partition of E into meager sets in  (X,$\mathscr{C}$). Then there exist two disjoint subfamilies $\Phi_{1}$ and $\Phi_{2}$ of T such that T=$\Phi_{1}\cup\Phi_{2}$ and each of the union $\bigcup\limits_{t\in\Phi_{1}}E_{t}$ and $\bigcup\limits_{t\in\Phi_{2}}E_{t}$ is full subsets of E. Consequently, there exists  a countably infinite sequence $ \lbrace \Phi_{n}\overunderset{\infty}{n=1}{\rbrace} $ of mutually disjoint subfamilies of T such that $\bigcup\limits_{n=1}^{\infty}\Phi_n$=T and each of the union $\bigcup\limits_{t\in\Phi_n}E_{t} $ is a full subset of E.
\end{theorem}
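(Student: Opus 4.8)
The plan is to recast the conclusion ``$F_1$ and $F_2$ are both full in $E$'' as a purely combinatorial condition on $T$ and then to realise it by a short transfinite colouring. The conceptual heart of the argument is a reformulation of fullness that I would isolate first: for any $F\subset E$, the set $F$ is full in $E$ if and only if $E\setminus F$ contains no abundant Baire set. Indeed, if $E\setminus F$ contains an abundant Baire set $B$, then $B\subset E$ gives $B\cap E=B$ abundant while $B\cap F=\emptyset$ is meager, so $F$ is not full. Conversely, if $F$ is not full, there is an abundant Baire set $B$ with $B\cap E$ abundant and $B\cap F$ meager; then $B\cap(E\setminus F)=(B\cap E)\setminus(B\cap F)$ is abundant, and by Theorem 2.10 it contains a set $F_0\setminus G$ with $F_0\in\mathscr{C}'$ and $G\in\mathscr{M}(\mathscr{C})\cap(\bigcup_{\sigma}\bigcap_{<\mathrm{card}(X)}\mathscr{K})$. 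Since regions and meager sets are Baire sets, $F_0\setminus G$ is an abundant Baire set lying inside $E\setminus F$, which proves the equivalence.

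With this criterion in hand, applying it to both $F_1$ and $F_2=E\setminus F_1$ shows that \emph{both} are full exactly when neither $F_1$ nor $F_2$ contains an abundant Baire set. Invoking Theorem 2.10 once more, a part $F_i$ contains an abundant Baire set if and only if it contains some $H=F_0\setminus G$ of the above form (automatically $H\subset E$). Hence I would enumerate the family $\{H_\xi:\xi<\delta\}$ of all such abundant sets $F_0\setminus G$ contained in $E$; by assumption (ii) and the cardinality estimate recorded after Theorem 2.9, this family has $\delta\le\mathrm{card}(X)$. Writing $T_\xi=\{t\in T:H_\xi\cap E_t\neq\emptyset\}$ for the support of $H_\xi$, the theorem reduces to choosing the partition $T=\Phi_1\cup\Phi_2$ so that $T_\xi\cap\Phi_1\neq\emptyset$ and $T_\xi\cap\Phi_2\neq\emptyset$ for every $\xi$, i.e. each support is bichromatic.

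The colouring is then forced by the two quantitative hypotheses. Because $H_\xi=\bigcup_{t\in T_\xi}(H_\xi\cap E_t)$ is abundant while each piece $H_\xi\cap E_t$ is meager, the $\mathrm{card}(X)$-additivity of $\mathscr{M}(\mathscr{C})$ (assumption (iii)) forces $\mathrm{card}(T_\xi)\ge\mathrm{card}(X)$. I would then run a transfinite recursion of length $\delta$: at stage $\xi$, having committed fewer than $\mathrm{card}(X)$ indices so far, the regularity of $\mathrm{card}(X)$ (assumption (i)) guarantees two indices $t_\xi,t_\xi'\in T_\xi$ not previously used; reserve $t_\xi$ for $\Phi_1$ and $t_\xi'$ for $\Phi_2$. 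Finally set $\Phi_2=\{t_\xi':\xi<\delta\}$ and $\Phi_1=T\setminus\Phi_2$. Freshness keeps $t_\xi\in\Phi_1$ and $t_\xi'\in\Phi_2$, so every $T_\xi$ meets both classes, and the criterion of the first paragraph makes both $\bigcup_{t\in\Phi_1}E_t$ and $\bigcup_{t\in\Phi_2}E_t$ full subsets of $E$.

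For the final assertion I would iterate this two-set splitting, using transitivity of fullness (if $A$ is full in $E$ and $C$ is full in $A$, then $C$ is full in $E$, directly from the definition). Starting with $T^{(0)}=T$, split $T^{(n-1)}=\Phi_n\cup T^{(n)}$ so that $\bigcup_{t\in\Phi_n}E_t$ and $\bigcup_{t\in T^{(n)}}E_t$ are full in $\bigcup_{t\in T^{(n-1)}}E_t$; transitivity promotes each $\bigcup_{t\in\Phi_n}E_t$ to a full subset of $E$, and absorbing the residue $\bigcap_n T^{(n)}$ into $\Phi_1$ (enlarging an already full set preserves fullness) yields the disjoint sequence $\{\Phi_n\}_{n=1}^{\infty}$ with $\bigcup_n\Phi_n=T$. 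I expect the only genuine obstacle to be the preliminary reformulation of fullness together with the verification that the witness $F_0\setminus G$ is an abundant Baire set inside $E\setminus F$; once the problem is reduced to making each support $T_\xi$ bichromatic, Theorem 2.10, assumption (iii) and the regularity in assumption (i) make the colouring entirely routine.
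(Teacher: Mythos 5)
Your overall architecture --- recasting ``both parts are full'' as a combinatorial condition (every witness set must meet both parts) and then realising it by a transfinite colouring from assumptions (i)--(iii) --- is essentially the paper's, with your recursion playing the role of its Proposition 2.12. But there is a genuine gap at the foundational step, and it is not a repairable technicality in the form you state it. The equivalence ``$F$ is full in $E$ iff $E\setminus F$ contains no abundant Baire set'' is valid only when $E$ is itself a Baire set (this is exactly the caveat recorded in Definition 2.8). Your proof of the converse applies Theorem 2.10 to $B\cap(E\setminus F)$, which is abundant but need not be a Baire set; despite its wording, Theorem 2.10 is proved (and is used in the paper) only for abundant \emph{Baire} sets --- its proof begins with $S\in\mathscr{B}(\mathscr{C})\setminus\mathscr{M}(\mathscr{C})$ and invokes a theorem of Morgan valid only for such sets. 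The conclusion genuinely fails for arbitrary abundant sets: in Example 2.2 ($\mathscr{C}$ the usual topology on $\mathbb{R}$), a Bernstein set is abundant, yet it contains no set of the form $F_0\setminus G$, since any such set is abundant with the Baire property and hence contains a perfect set. The same example breaks your reduction, not just the lemma: for a Bernstein-type abundant $E$ (one containing no abundant Baire set at all), your family $\{H_\xi:\xi<\delta\}$ of witnesses \emph{contained in $E$} is empty, every partition of $T$ vacuously makes ``each support bichromatic,'' and your argument would certify $\Phi_1=T$, $\Phi_2=\emptyset$ --- yet $\emptyset$ is never a full subset of an abundant $E$ (take $B=X$, which is an abundant Baire set with $B\cap E=E$ abundant).

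What is missing is precisely the one hypothesis you never invoke: assumption (v), the existence of hulls, which exists exactly to handle such completely non-Baire $E$. The paper's proof takes a hull $S$ of $E$ --- an abundant Baire set containing $E$ in which $E$ is full --- and collects the witness family $\mathscr{H}=\lbrace F\setminus G : F\in\mathscr{C}',\ G\in\mathscr{M}(\mathscr{C})\cap(\bigcup_{\sigma}\bigcap_{<\mathrm{card}(X)}\mathscr{K}),\ F\setminus G\subset S\rbrace$ \emph{inside $S$, not inside $E$}. Because $S$ is Baire and $E$ is full in $S$, one obtains the correct criterion: $F\subset E$ is full in $E$ iff every $H\in\mathscr{H}$ meets $F$; here the hull property guarantees that $H\cap E$ is abundant for each $H\in\mathscr{H}$, which is also what yields $\mathrm{card}(Y(H))\ge\mathrm{card}(X)$ via assumption (iii) (your corresponding cardinality argument presupposed $H_\xi\subset E$). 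With the witnesses relocated to $S$, the remainder of your proposal --- the transfinite colouring using regularity of $\mathrm{card}(X)$, which in effect reproves the weak form of Proposition 2.12 actually needed, and the iteration with transitivity of fullness for the countable sequence --- does go through and parallels the paper's argument.
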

 To prove the above theorem, we need the following well-known auxilliary proposition [4].

 \begin{proposition}
     
 Let  $ \lbrace X_{i}{\rbrace}_{i\in I} $ be a family of subsets of an infinite set $X$ such that
 \begin{enumerate}[label=(\alph*)]
 \item card$ (X_{i})$=card(X) for each i$\in I$
 \item card(I)$\leq$card($X$).
\end{enumerate}

 Then there exist sets $Y, Z$ being a decomposition of $X$, such that card($Y\cap X_{i}$)=card($Z\cap X_{i}$)=card($X$) for each i$\in I$.
\end{proposition}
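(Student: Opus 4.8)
The plan is to prove Proposition 2.12 by a single transfinite recursion that fills the demands ``put $\kappa$ points of $X_i$ into $Y$'' and ``put $\kappa$ points of $X_i$ into $Z$'' for all indices $i$ simultaneously, always maintaining disjointness. Throughout write $\kappa=\mathrm{card}(X)$, an infinite cardinal; by hypothesis (a) each $X_i$ has cardinality $\kappa$, and by (b) we have $\mathrm{card}(I)\leq\kappa$. Since $\mathrm{card}(X_i)=\mathrm{card}(X)=\kappa$ and $X$ is infinite, each $X_i$ is in particular infinite with $\kappa$ available points.

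First I would set up the bookkeeping of ``demands''. Consider the index set $J=I\times\{0,1\}\times\kappa$, where the triple $(i,s,\beta)$ records the intention to place a $\beta$-th point of $X_i$ on side $s$ (with $s=0$ meaning $Y$ and $s=1$ meaning $Z$). Because $\kappa$ is infinite and $\mathrm{card}(I)\leq\kappa$, cardinal arithmetic gives $\mathrm{card}(J)=\mathrm{card}(I)\cdot 2\cdot\kappa=\kappa$, so I may fix a bijection and enumerate $J=\{(i_\xi,s_\xi,\beta_\xi):\xi<\kappa\}$ in order type $\kappa$.

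Next comes the recursion itself. I would define, by transfinite recursion on $\xi<\kappa$, a point $x_\xi\in X_{i_\xi}$ chosen distinct from all previously selected points $\{x_\eta:\eta<\xi\}$. The crucial counting observation that makes every stage succeed is that, since $\kappa=\mathrm{card}(X)$ is a cardinal, every ordinal $\xi<\kappa$ satisfies $\mathrm{card}(\xi)<\kappa$; hence at stage $\xi$ the set of already-used points has cardinality at most $\mathrm{card}(\xi)<\kappa=\mathrm{card}(X_{i_\xi})$, leaving unused points of $X_{i_\xi}$ from which to pick $x_\xi$. Note that only the fact that $\kappa$ is a cardinal is used here, not the regularity assumed elsewhere in the paper.

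Finally I would assemble the decomposition. Put $Y_0=\{x_\xi:s_\xi=0\}$ and $Z_0=\{x_\xi:s_\xi=1\}$; these are disjoint because the $x_\xi$ are pairwise distinct and each is assigned to exactly one side. For a fixed $i$, as $\beta$ ranges over $\kappa$ the demands $(i,0,\beta)$ produce $\kappa$ distinct points $x_\xi\in X_{i_\xi}=X_i$ lying in $Y_0$, so $\mathrm{card}(Y_0\cap X_i)=\kappa$, and symmetrically $\mathrm{card}(Z_0\cap X_i)=\kappa$. To turn $(Y_0,Z_0)$ into a genuine decomposition of $X$, I absorb the leftover points into one side, setting $Y=Y_0\cup(X\setminus(Y_0\cup Z_0))$ and $Z=Z_0$. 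Then $Y\cup Z=X$ and $Y\cap Z=\emptyset$, while $Y\supseteq Y_0$ and $Z=Z_0$ force the required equalities $\mathrm{card}(Y\cap X_i)=\mathrm{card}(Z\cap X_i)=\kappa$ for every $i\in I$. I expect the only genuine subtlety to be the per-stage counting in the recursion, namely confirming that strictly fewer than $\kappa$ points are committed before stage $\xi$; this rests precisely on $\kappa$ being a cardinal, and everything else is routine.
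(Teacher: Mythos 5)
Your proof is correct. Note, however, that the paper itself does not prove Proposition 2.12 at all: it is quoted as a ``well-known auxiliary proposition'' with a citation to Hejduk [4], and the proof environment that follows it in the source is actually the proof of Theorem 2.11. So your argument is not diverging from the paper's proof; it is supplying one where the paper defers to the literature. Your construction is the standard one for this kind of statement: enumerate the $\kappa$-many demands $(i,s,\beta)\in I\times\{0,1\}\times\kappa$ in order type $\kappa=\mathrm{card}(X)$, pick at stage $\xi$ a fresh point of $X_{i_\xi}$ (possible because $\mathrm{card}(\xi)<\kappa=\mathrm{card}(X_{i_\xi})$, which uses only that $\kappa$ is a cardinal), and absorb the unclaimed points of $X$ into one side at the end. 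Your remark that regularity of $\kappa$ plays no role here is accurate and worth making, since the paper assumes regularity of $\mathrm{card}(X)$ elsewhere for its additivity and saturation hypotheses; the proposition itself needs none of that. Two cosmetic points: your count gives $\mathrm{card}(Y_0\cap X_i)\geq\kappa$ directly, with equality following since everything lies in $X$; and in the degenerate case $I=\emptyset$ your $Z$ comes out empty, which is harmless since the conclusion is then vacuous, but if ``decomposition'' is read as requiring both pieces nonempty one splits $X$ arbitrarily in that case.
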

\begin{proof}
\ Let $S$ be the hull of $E$ which exist by assumption(v) . Then $S \in\mathscr{B}(\mathscr{C})\backslash\mathscr{M}(\mathscr{C})$ and for every $A\in \mathscr{B}(\mathscr{C})\backslash \mathscr{M}(\mathscr{C})$, $A\cap$E$\notin$ $\mathscr{M}(\mathscr{C})$ whenever $A\cap$S$\notin$ $\mathscr{M}(\mathscr{C})$. Let \mbox{$\mathscr{H}$=$\lbrace F\backslash G: F\in\mathscr{C'},G\in\mathscr{M}(\mathscr{C})\bigcap(\bigcup\limits_{\sigma}\bigcap\limits_{<card(X)}$
$\mathscr{K})$}
and $F\backslash G\subset S \rbrace$. By Theorem 2.10, every abundant Baire set in 
(X,$\mathscr{C})$ contained in $S$ contains a set from the family $\mathscr{H}$. For any $H\in\mathscr{H}$, let $Y(H)=\lbrace t\in T: E_t \cap H \neq \phi \rbrace$. Since ($X,\mathscr{C}$) is a Baire base, it follows from assumption (iii) and also the observation made at the end of the proof of Theorem 2.9 that card$(Y(H))=$card($X$) for every $H\in\mathscr{H}$ and moreover, card($\Sigma$)$\leq card(X)$ where $\Sigma$=$\lbrace Y(H):H\in\mathscr{H}\rbrace$. By proposition 2.12, there exist disjoint subfamilies $\Phi_{1}$ and $\Phi_{2}$ such that $T=\Phi_{1}\cup\Phi_{2}$ and card($\Phi_{1}\cap$ $Y(H))$=card($\Phi_{2}\cap$ $Y(H)$)=card($T$) for every $H\in \mathscr{H}$. Consider the unions $\bigcup\limits_{t\in\Phi_{1}}E_{t}$ and $\bigcup\limits_{t\in\Phi_{2}}E_t$. We claim that each of these union is a full subset of $E$. For otherwise, there would exist some $A\in\mathscr{B}(\mathscr{C})\backslash\mathscr{M}(\mathscr{C})$ such that $A\cap S\in\mathscr{B}(\mathscr{C})\backslash\mathscr{M}(\mathscr{C}$ but $A\cap(\bigcup\limits_{t\in\Phi_{1}}E_{t})\in\mathscr{M}(\mathscr{C})$.But then by Theorem 2.9 and 2.10, there exists some set from the family $\mathscr{H}$ which is disjoint with $\bigcup\limits_{t\in\Phi_{1}}E_{t}$ -a contradiction. Thus $\bigcup\limits_{t\in\Phi_{t}}E_{t}$ is a full subset of $E$. Likewise, we can show that $\bigcup\limits_{t\in\Phi_{2}}E_t$ is also a full subset of $E$.

To prove the remaining part, we proceed as follows: let $\Phi_{1}$ and $\Psi_{1}$ be disjoint subfamilies of $T$ such that $T=\Phi_{1}\cup\Psi_{1}$  and each of the unions $\bigcup\limits_{t\in\Phi_{1}}E_{t}$ and  $\bigcup\limits_{t\in\Psi_{1}}E_{t}$ is a full subsets of E. As $S$ is a hull of $E$, it is a hull of $\bigcup\limits_{t\in\Psi_{t}}E_{t}$ as well and using a  procedure similar as above , we find disjoint subfamilies $\Phi_{2}$ and $\Psi_{2}$ of $\Psi_1$ such that $\Psi_1=\Phi_2\cup\Psi_2$ and each of the union $\bigcup\limits_{t\in\Phi_{2}}E_{t}$ and $\bigcup\limits_{t\in\Psi_{2}}E_{t}$  is a full subset of $E$. We may continue with this process indefinitely to get mutually disjoint subfamilies $\Phi_{2},\Phi_{3},$.......,...$\Phi_{n},$......... such that each of the union $\bigcup\limits_{t\in\Phi_{n}}E_{t}$ (n=2,3,....) is a full subset of $E$. Finally we replace $\Phi_{1}$ by $T\backslash\bigcup\limits_{n=2}^{\infty}\Phi_{n}$. This proves the theorem.
\end{proof}
\begin{corollary} \textit{Let $E$ be an abundant set in a Baire base ($X,\mathscr{C}$) and $E=\bigcup\limits_{t\in T}E_{t}$ is a partition of $E$ into meager sets in ($X,\mathscr{C}$). Then there exist into two disjoint subfamilies $\Phi_{1}$ and $\Phi_{2}$ of $T$ such that $T=\Phi_{1}\cup\Phi_{2}$ and the two union $\bigcup\limits_{t\in\Phi_{1}}E_{t}$ and $\bigcup\limits_{t\in\Phi_{2}}E_{t}$
cannot be separated by Baire sets. Consequently, there exists a countably infinite sequence $ \lbrace \Phi_{n}\overunderset{\infty}{n=1}{\rbrace} $ of mutually disjoint subfamilies such that $T=\bigcup\limits_{n=1}^{\infty}\Phi_{n}$ and for no two distinct subfamilies, the corresponding unions can be separated by Baire sets.}
\end{corollary}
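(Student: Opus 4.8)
The plan is to show that the Corollary is nothing more than a restatement of Theorem~2.11 once the notion of \emph{fullness} is translated into the language of non-separation by Baire sets, exactly as Proposition~1.4 of Grzegorek and Labuda translates complete non-measurability into the condition that both $E$ and its complement be full. Accordingly, I would first isolate the following single fact and prove it in isolation: \emph{if $P$ and $Q$ are two full subsets of the abundant set $E$, then there is no Baire set $B$ with $P\subseteq B$ and $Q\cap B=\emptyset$}; that is, $P$ and $Q$ cannot be separated by Baire sets. Granting this, both assertions of the Corollary follow immediately, since Theorem~2.11 manufactures full subsets: for the first part it produces $\Phi_{1},\Phi_{2}$ with $T=\Phi_{1}\cup\Phi_{2}$ and $\bigcup_{t\in\Phi_{1}}E_{t}$, $\bigcup_{t\in\Phi_{2}}E_{t}$ both full in $E$, and for the second part it produces the sequence $\{\Phi_{n}\}_{n=1}^{\infty}$ with every $\bigcup_{t\in\Phi_{n}}E_{t}$ full in $E$, so that the displayed fact applies to any two distinct members.

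To prove the isolated fact, I would pass to the hull. Let $S$ be the hull of $E$, which exists by assumption~(v); then $S\in\mathscr{B}(\mathscr{C})\setminus\mathscr{M}(\mathscr{C})$ and, as already recorded in the proof of Theorem~2.11, $E$ is full in $S$. Using the transitivity of fullness---if $A\cap S$ is abundant then $A\cap E$ is abundant (because $E$ is full in $S$) and hence $A\cap P$ is abundant (because $P$ is full in $E$)---I conclude that $P$ and $Q$ are themselves full in $S$. Now suppose, for contradiction, that some Baire set $B$ separates them, so $P\subseteq B$ and $Q\cap B=\emptyset$. The set $B\cap S$ is Baire (intersections of Baire sets are Baire), and I split into two cases according to its category. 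If $B\cap S$ is abundant, then taking $A=B\cap S$ in the fullness of $Q$ in $S$ forces $A\cap Q$ to be abundant; but $A\cap Q\subseteq B\cap Q=\emptyset$, a contradiction. If $B\cap S$ is meager, then $S\setminus B=S\cap(X\setminus B)$ is an abundant Baire set (the complement of a Baire set is Baire, and deleting a meager part from the abundant set $S$ leaves it abundant); taking $A=S\setminus B$ in the fullness of $P$ in $S$ forces $A\cap P$ to be abundant, while $P\subseteq B$ gives $A\cap P=\emptyset$, again a contradiction. Either way the assumed separation is impossible.

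The main obstacle, I expect, is not the category machinery---transitivity of fullness and the closure of $\mathscr{B}(\mathscr{C})$ under complements and finite intersections are routine or available from the cited results of Morgan---but rather fixing the precise meaning of ``cannot be separated by Baire sets'' so that the two-case argument closes symmetrically. The delicate point is that one case is driven by the fullness of $Q$ together with $Q\cap B=\emptyset$, and the other by the fullness of $P$ together with $P\subseteq B$, so both sets must genuinely be full for the dichotomy on $B\cap S$ to produce a contradiction in every case. Once this lemma is in hand, the passage from Theorem~2.11 to the Corollary is purely formal; in particular the first statement (for $\Phi_{1},\Phi_{2}$) is just the instance $i\neq j$ of the second applied to the two-term decomposition.
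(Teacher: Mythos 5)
Your proposal is correct and takes essentially the paper's own route: the paper states this corollary without a separate proof, treating it as an immediate consequence of Theorem 2.11 once one knows that two full subsets of $E$ cannot be separated by Baire sets, which is exactly the lemma you isolate and prove. The only remark worth adding is that your detour through the hull $S$ (and hence assumption (v) and the transitivity of fullness) is avoidable: given a Baire set $B$ with $P\subseteq B$ and $Q\cap B=\emptyset$, at least one of $E\cap B$ and $E\setminus B$ is abundant, and applying the definition of fullness directly to the abundant Baire set $B$ (respectively $X\setminus B$) already yields the contradiction, so the lemma holds in any Baire base without invoking the existence of hulls.
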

Since every set is a partition of singletons, so the following corollary of Theorem 2.11 generalizes Theorem 1.3 of Grzegorek and Labuda.
\begin{corollary}
    \textit{Let E be an abundant set in a point-meager Baire base (X,$\mathscr{C}$). Then there exist disjoint subsets $E_{1}$ and $E_{2}$ of $E$ such that $E=E_{1}\cup E_{2}$ and each of the sets $E_{1}$ and $E_{2}$ is a full subset of $E$. Consequently, there exists a countably infinite sequence $ \lbrace E_{n}\overunderset{\infty}{n=1}{\rbrace} $ of mutually disjoint subsets of $E$ such that $E=\bigcup\limits_{n=1}^{\infty}E_{n}$ and each of the sets $E_{n}$(n=1,2,3,......) is a  full subset of E and therefore no two distinct members in the collection can be separated by Baire sets.}
\end{corollary}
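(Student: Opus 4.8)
The plan is to deduce this statement directly from Theorem 2.11 by specializing the partition to the finest possible one: the partition of $E$ into its own singletons. First I would note that, because $(X,\mathscr{C})$ is point-meager, every singleton $\{x\}$ is meager. Hence, taking $T=E$ and $E_t=\{t\}$ for each $t\in E$, the decomposition $E=\bigcup_{t\in T}E_t$ is precisely a partition of the abundant set $E$ into meager sets, so the hypotheses of Theorem 2.11 are satisfied verbatim.

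Applying Theorem 2.11, I obtain disjoint subfamilies $\Phi_1,\Phi_2\subseteq T=E$ with $T=\Phi_1\cup\Phi_2$ such that $\bigcup_{t\in\Phi_1}E_t$ and $\bigcup_{t\in\Phi_2}E_t$ are both full subsets of $E$. Since $E_t=\{t\}$, these unions are literally $\Phi_1$ and $\Phi_2$; putting $E_1=\Phi_1$ and $E_2=\Phi_2$ yields the decomposition $E=E_1\cup E_2$ into two disjoint full subsets. The countably infinite statement follows in the same way from the ``consequently'' part of Theorem 2.11: it produces mutually disjoint $\{\Phi_n\}_{n=1}^{\infty}$ with $\bigcup_{n}\Phi_n=E$ and each $\bigcup_{t\in\Phi_n}\{t\}=\Phi_n$ full, so I simply set $E_n=\Phi_n$.

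It remains to justify the closing clause that no two distinct $E_m,E_n$ can be separated by Baire sets, which I would argue directly. Suppose a Baire set $P$ separated them, say $E_m\subseteq P$ and $E_n\cap P=\emptyset$. Taking $B$ to be a hull of $E$ in the definition of fullness shows that each full subset of the abundant set $E$ is itself abundant, so $E_m$ is abundant; since subsets of meager sets are meager and $E_m\subseteq P\cap E$, both $P$ and $P\cap E$ are abundant. But then $P$ is an abundant Baire set with $P\cap E$ abundant, so fullness of $E_n$ forces $P\cap E_n$ to be abundant, contradicting $P\cap E_n=\emptyset$. Hence no separating Baire set exists, and the same contradiction applies to any pair $E_m,E_n$ in the sequence.

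I expect no serious obstacle here, since the entire substantive content is already carried by Theorem 2.11; the specialization to singletons, together with the short separation argument above, is routine. The only points meriting care are the trivial identification $\bigcup_{t\in\Phi}\{t\}=\Phi$, which transfers fullness from the language of index subfamilies in Theorem 2.11 to fullness of the point sets $E_i$, and making the phrase ``separated by Baire sets'' precise in the form used above.
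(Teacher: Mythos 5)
Your proposal is correct and is exactly the paper's route: the paper derives this corollary from Theorem 2.11 via the one-line observation that, in a point-meager base, every set is a partition of itself into (meager) singletons. Your additional verification of the non-separation clause (using that a full subset of an abundant set is abundant, so any Baire set $P$ containing $E_m$ and disjoint from $E_n$ would be an abundant Baire set meeting $E$ abundantly yet meeting $E_n$ in $\emptyset$, contradicting fullness) is a correct filling-in of a step the paper asserts without proof.
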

Here we present a variant of Theorem 2.11 where instead of using any combinatorial theorem, we use a Bernstein type of construction which is not dependent on any of the aforestated assumptions.It may be checked (Theorem 33,I,Ch 5, [6]) that the pair ($\mathbb{R},\mathscr{P}$), where $\mathscr{P}$ denotes the family of all perfect sets in $\mathbb{R}$ is a perfect base (Example 2.4) known as the Marczewski base and in this category base, the singular, meager, abundant and Baire sets constitutes the Marczewski classification of sets. The Baire sets in this category base are called Marczewski($s$) sets; the singular, meager$(s_{0})$ and abundant sets are called Marczewski singular, Marczewski meager and Marczewski abundant sets respectively.

\begin{theorem} In the Marczewski base ($\mathbb{R},\mathscr{P}$), if $\mathscr{A}=\lbrace A_{\alpha}:\alpha\in\Lambda \rbrace$ is a family of mutually disjoint $s_{0}$-sets such that $\bigcup\limits_{\alpha\in\Lambda} A_{\alpha}$
is not in $s_0$, then there exist disjoint subfamilies  $\Lambda_{1}$ and $\Lambda_{2}$ of $\Lambda$ such that $\Lambda=\Lambda_{1}\cup\Lambda_{2}$ and each of the union $\bigcup\limits_{\alpha\in\Lambda_{1}}A_{\alpha}$ and $\bigcup\limits_{\alpha\in\Lambda_{2}}A_{\alpha}$ is a full subset of $\bigcup\limits_{\alpha\in \Lambda}A_{\alpha}$. Consequently,there exist a countably infinite sequence $\lbrace \Lambda_{n} \overunderset {\infty}{n=1} \rbrace$of disjoint subfamilies of $\Lambda$ such that $\Lambda=\bigcup\limits_{n=1}^{\infty}\Lambda_{n}$ and each of the union $\bigcup\limits_{\alpha\in\Lambda_{n}} A_{\alpha}$(n=1,2,...) is a full subset of $\bigcup\limits_{\alpha\in\Lambda}A_{\alpha}$ and therefore cannot be separated by s-sets.
\end{theorem}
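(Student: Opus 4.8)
The plan is to first convert the notion of a \emph{full} subset into a concrete statement about perfect sets, and then realise the required splitting of $\Lambda$ by a transfinite Bernstein recursion. Recall that in $(\mathbb{R},\mathscr{P})$ the singular, meager and $s_{0}$ classes coincide, and that a point lies in at most one $A_{\alpha}$. I would begin with two reductions. First, an $s$-set $B$ is abundant if and only if it contains a perfect set: if $B\notin s_{0}$ pick a perfect $P$ every perfect subset of which meets $B$, and use the $s$-property of $B$ to refine $P$ to a perfect $P'\subseteq B$. Second, and crucially, $F\subseteq E$ is full in $E$ \emph{iff} $F$ meets every perfect set $R$ with $R\cap E$ abundant. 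The forward direction is immediate from the definition; for the converse, given a perfect $P$ with $P\cap E$ abundant I would shrink $P$ to a perfect $Q\subseteq P$ \emph{every} perfect subset of which meets $E$ (take a witness $W$ for $P\cap E\notin s_{0}$; since every perfect subset of $W$ meets $P$, the open-in-$W$ set $W\setminus P$ is countable, so $W\cap P$ contains such a perfect $Q$), and then observe that $F$ meeting every perfect subset of $Q$ forces $P\cap F$ to be abundant with witness $Q$.

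With this criterion in hand, the problem becomes purely combinatorial: I must split $\Lambda=\Lambda_{1}\cup\Lambda_{2}$ so that, writing $\Lambda_{R}=\{\alpha:A_{\alpha}\cap R\neq\emptyset\}$ for each perfect $R$ with $R\cap E$ abundant, both $\Lambda_{1}$ and $\Lambda_{2}$ meet every such $\Lambda_{R}$. The key counting observation is that each such $\Lambda_{R}$ is uncountable: since $R\cap E=\bigcup_{\alpha\in\Lambda_{R}}(R\cap A_{\alpha})$ with every $R\cap A_{\alpha}\in s_{0}$, and $s_{0}$ is a $\sigma$-ideal, a countable $\Lambda_{R}$ would force $R\cap E\in s_{0}$, contradicting abundance. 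In fact, using pairwise disjoint perfect subsets of the abundance region one sees $\lvert R\cap E\rvert=\mathfrak{c}$.

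The construction itself is the Bernstein recursion. I would enumerate the (at most $\mathfrak{c}$ many) perfect sets $R$ with $R\cap E$ abundant as $\{R_{\xi}:\xi<\mathfrak{c}\}$ and recursively choose, at stage $\xi$, two distinct indices $\alpha_{\xi},\beta_{\xi}\in\Lambda_{R_{\xi}}$ that have not yet been committed, placing $\alpha_{\xi}$ into $\Lambda_{1}$ and $\beta_{\xi}$ into $\Lambda_{2}$, so that the two growing index-families remain disjoint. After the recursion I dump every still-undecided index of $\Lambda$ into $\Lambda_{1}$, which secures $\Lambda=\Lambda_{1}\cup\Lambda_{2}$. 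By construction $\bigcup_{\Lambda_{1}}A_{\alpha}$ and $\bigcup_{\Lambda_{2}}A_{\alpha}$ each meet every relevant $R_{\xi}$, so both are full in $E$ by the criterion above, with no appeal to any of the assumptions (i)--(v).

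The main obstacle I anticipate is precisely the bookkeeping in this recursion: at stage $\xi$ only $<\mathfrak{c}$ indices have been committed, so to keep selecting fresh $\alpha_{\xi},\beta_{\xi}$ I need $\lvert\Lambda_{R_{\xi}}\rvert=\mathfrak{c}$, whereas $\sigma$-additivity of $s_{0}$ only delivers $\lvert\Lambda_{R}\rvert\geq\aleph_{1}$ (this is exactly why the additivity assumption (iii) is not available here and Theorem~2.11 cannot simply be invoked). I would therefore spend the real effort establishing $\lvert\Lambda_{R}\rvert=\mathfrak{c}$, or otherwise arranging the coloring so that no $\Lambda_{R_{\xi}}$ is exhausted monochromatically before it is processed; under CH the counting is automatic and the recursion runs without friction. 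For the \emph{consequently} clause I would iterate the two-set splitting exactly as in the proof of Theorem~2.11: split off a full $\Lambda_{1}$ with full remainder $\Psi_{1}$, split $\Psi_{1}$ again, and so on, finally redefining $\Lambda_{1}:=\Lambda\setminus\bigcup_{n\geq2}\Lambda_{n}$. Finally, non-separation by $s$-sets follows as in Corollary~2.14: an $s$-set separating two full subsets of $E$ would carry an abundant trace to only one side, contradicting fullness.
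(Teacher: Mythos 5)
Your overall strategy is the same as the paper's: reduce fullness to meeting perfect sets with abundant trace on $E$, then run a Bernstein-type transfinite recursion choosing two fresh indices per perfect set, and iterate countably for the final clause. (In one respect you are even more careful than the paper: you enumerate \emph{all} perfect $R$ with $R\cap E$ abundant, whereas the paper enumerates only the perfect subsets of a single perfect set $P$ in which $B=\bigcup_{\alpha\in\Lambda}A_{\alpha}$ is everywhere abundant.) However, you have left the load-bearing step unproven, and you say so yourself: the recursion needs $\mathrm{card}(\Lambda_{R})=2^{\omega}$ for every relevant perfect $R$, while $\sigma$-additivity of $s_{0}$ only gives $\mathrm{card}(\Lambda_{R})\geq\aleph_{1}$, and your observation that $\mathrm{card}(R\cap E)=2^{\omega}$ does not help, since continuum many points may be spread over fewer than continuum many $A_{\alpha}$'s. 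Retreating to CH is not an option: the theorem is asserted in ZFC, and without the counting the recursion genuinely stalls, because at some stage $\xi$ the (possibly small) set $\Lambda_{R_{\xi}}$ may already be exhausted by earlier commitments. So as written, your proposal proves the theorem only under CH.

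The missing idea is exactly the paper's Cantor-scheme construction, which is run on the \emph{index family} simultaneously with the perfect sets. Inside a perfect $P$ in which $B$ is everywhere abundant, one recursively produces, for $h\in 2^{n}$, pairwise disjoint subfamilies $\mathscr{A}_{h}\subset\mathscr{A}$ and pairwise disjoint perfect sets $P_{h}\subset P$ (with diameters shrinking to $0$) such that $B\cap P_{h}$ is abundant and $B\cap P_{h}\subset\cup\mathscr{A}_{h}$. The perfect kernel $Q$ determined by the branches meets $B$ in continuum many points (otherwise some perfect subset of $Q$ would miss $B$, contradicting everywhere-abundance), and---this is the step that converts points into indices---two points of $B\cap Q$ lying on distinct branches $f\neq f'$ belong to sets from the disjoint subfamilies $\mathscr{A}_{f|n}$ and $\mathscr{A}_{f'|n}$ for large $n$, hence to distinct members of $\mathscr{A}$. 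Thus continuum many $A_{\alpha}$ meet $Q$, and applying this inside an arbitrary perfect $R$ with $R\cap E$ abundant (first shrinking $R$, via the Fundamental Theorem, to a perfect set in which $R\cap E$ is everywhere abundant) yields $\mathrm{card}(\Lambda_{R})=2^{\omega}$ in ZFC; with that lemma your recursion and iteration go through. A second, smaller gap: your verification of the fullness criterion only tests perfect witnesses, while Definition 2.8 quantifies over all abundant Baire ($s$-) sets $S$; to close it, take a perfect $P'$ in which $S\cap E$ is everywhere abundant, use the Baire property of $S$ to find a perfect $R\subset P'$ with $R\backslash S\in s_{0}$, and transfer the abundance of $R\cap F$ to $S\cap F$ by discarding the $s_{0}$ set $R\backslash S$.
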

\begin{proof} Let $B=\bigcup\limits_{\alpha\in\Lambda}A_{\alpha}$. Since $B$ is not $s_{0}$, so it is abundant everywhere in some perfect set $P$ by the Fundamental Theorem (II, Ch 2, [6]). To start with, we show that card($\mathscr{A}_{p})=2$$^{\omega}$(the cardinality of the continuum) where $\mathscr{A}_{p}=\lbrace A_{\alpha}\cap P:A_{\alpha}\in\mathscr{A}\rbrace$.

Divide $\mathscr{A}$ into disjoint subfamilies $\mathscr{A}_{0}$ and $\mathscr{A}_{1}$ and choose perfect sets $P_{0},P_{1}(\subset P)$ such that $B\cap$$P_{o}$ and $B\cap$$P_{1}$ are abundant, $B\cap P_{0}\subset\cup\mathscr{A}_{0}$ and $B\cap P_{1}\subset\cup\mathscr{A}_{1}$.Suppose, at the nth-stage  we have already constructed subfamilies $\lbrace \mathscr{A}_{h}:h\in2^n\rbrace$ such that-
\begin{enumerate}
    \item $\mathscr{A}_{h}\subset\mathscr{A}_{h| n-1}$
    \item $\mathscr{A}_{h}\cap\mathscr{A}_{h'}=\phi$ if $h,h'(\in 2^n)$, $h\neq h'$.
    \item there are disjoint perfect sets $P_{h}\subset P_{h| n-1}$ such that $B\cap$P$_{h}$ is abundant and $B\cap P_{h}\subset\cup\mathscr{A}_{h}$.
\end{enumerate}
Now fix $h\in 2^n$ and divide $\mathscr{A}_{h}$ into disjoint subfamilies $\mathscr{A}_{\hat{h_{0}}}$ and $\mathscr{A}_{\hat{h_{1}}}$ such that for some disjoint perfect sets $P_{\hat{h 0}},P_{\hat{h1}}\subset P_h$ both $B\cap P_{\hat{h0}}$ and $B\cap P_{\hat{h1}}$ are abundant; $B\cap P_{\hat{h0}}\subset\cup\mathscr{A}_{\hat{h0}}$ and $B\cap P_{\hat{h1}}\subset\cup\mathscr{A}_{\hat{h1}}$  If we continue with this construction we obtain a collection $\lbrace\mathscr{A}_{f}:f\in2^\omega\rbrace$ of subfamilies satisfying the following set of  properties:
\begin{enumerate}[label=(\roman*)]
    \item $\mathscr{A}_{f|n}\subset\mathscr{A}_{f|m}$ for any $f\in2^\omega$ and $m<n.$
    \item $\mathscr{A}_{f|n}\subset\mathscr{A}_{f'|n}$ for any $f,f'\in 2^\omega$, $f \neq f'$.
    \item $\cup\lbrace\mathscr{A}_{f|n}:f\in2^\omega\rbrace=\cup\mathscr{A}$
    \item there are disjoint perfect sets $P_{f|n}\subset P_{f|_{n-1}}$ such that $B\cap P_{f|n}$ is abundant and $B\cap P_{f| n}\subset\cup\mathscr{A}_{f|n}$
\end{enumerate}
Let $Q=\lbrace  \bigcap \limits_{n=1}^{\infty}P_{f|n}:f\in2^\omega\rbrace$. Clearly $Q$ is a perfect set contained in $P$ and $card(B\cap Q)$=$2^\omega$, for otherwise, by Theorem 24, I, Ch 5,[6], there would exist a perfect set $T\subset Q$ such that $B\cap T=\phi$, contradicting the fact that $B$ is abundant everywhere in $P$. Also, our process of construction suggests that the cardinality of the set $\lbrace \alpha\in\Lambda:A_{\alpha}\cap Q\neq\phi\rbrace$ is $2^\omega$.

Let $\lbrace P_{\alpha}:\alpha<2^\omega\rbrace$ be an enumeration of all perfect subsets of $P$ and for each $\alpha\in\Lambda$, let $\Lambda_{\alpha}=\lbrace\beta\in\Lambda:A_{\beta}\cap P_{\alpha}\neq\phi\rbrace $. From above, for each $\alpha\in\Lambda$, card($\Lambda_{\alpha})=2^\omega$, so that we choose disjoint sets $A_{\alpha}^{0}$, $A_{\alpha}^{1} \in\lbrace A_{\beta}:\beta\in\Lambda_{\alpha}\rbrace\backslash(\lbrace A_{\gamma}^{0}:\gamma<\alpha\rbrace\cup\lbrace A_{\gamma}^{1}:\gamma<\alpha\rbrace)$ and form disjoint  subfamilies $\lbrace A_{\alpha}^{0}:\alpha\in\Lambda\rbrace$ and $\lbrace A_{\alpha}^{1}:\alpha\in\Lambda\rbrace$.Let $C=\bigcup\lbrace A_{\alpha}^{0}:\alpha\in\Lambda\rbrace $ and $D=\bigcup\lbrace A_{\alpha}^{1}:\alpha\in\Lambda\rbrace$ Then $C$ and $D$ are full subsets of  $B$. As $D$ is contained in the complement of $C$ in $B$, so $C$ and its complement in $B$ are two disjoint union each of which is a full subset of $\bigcup\limits_{\alpha\in\Lambda}A_{\alpha}$. A countable repetition of this argument proves the theorem.
\end{proof}
Marczweski base is a perfect base (Example 2.4) and since every perfect base is a point-meager base (Theorem 3,I,Ch 5,[6]), so follows the corollary stated below.

\begin{corollary}  In the Marczewski base ($\mathbb{R},\mathscr{P})$, if $E$ is a set not in $s_{0}$, then there exist disjoint subsets $E_1$ and $E_2$ of $E$ such that $E=E_1\cup E_2$ and each of the sets $E_1$ and $E_2$  is a full subset of $E$. Consequently, there exists a countably infinite sequence $\lbrace E_n\overunderset{\infty}{n=1}\rbrace$ of mutually $s_0$ disjoint subsets of $E$ such that $E=\bigcup\limits_{n=1}^{\infty}E_n$ and each of the sets $E_n$ (n=1,2,...) is a full subset of $E$ and so no two distinct members in the collection can be separated by $s$-sets.
\end{corollary}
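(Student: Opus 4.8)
The plan is to deduce this corollary from the preceding Marczewski-base theorem (Theorem 2.15) by applying it to the finest possible partition of $E$, namely the partition into singletons. The only preliminary observation needed is that in $(\mathbb{R},\mathscr{P})$ every singleton is an $s_0$-set: the Marczewski base is a perfect base (Example 2.4), every perfect base is point-meager (Theorem 3,I,Ch 5,[6]), and in a point-meager base each singleton is meager, which in this base means precisely that it is $s_0$.

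With this in hand I would set $\Lambda=E$ and, for each $x\in E$, put $A_x=\{x\}$. Then $\mathscr{A}=\{A_x:x\in E\}$ is a family of mutually disjoint $s_0$-sets whose union is $\bigcup_{x\in E}\{x\}=E$, and by hypothesis $E\notin s_0$. Thus the hypotheses of Theorem 2.15 hold verbatim, and no further construction is required: the Bernstein-type argument that does the real work has already been carried out there.

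Invoking Theorem 2.15 then yields disjoint subfamilies $\Lambda_1,\Lambda_2\subseteq\Lambda=E$ with $\Lambda_1\cup\Lambda_2=E$ such that each of $\bigcup_{x\in\Lambda_i}\{x\}$ is a full subset of $\bigcup_{x\in E}\{x\}=E$. Since $\bigcup_{x\in\Lambda_i}\{x\}=\Lambda_i$, putting $E_1=\Lambda_1$ and $E_2=\Lambda_2$ gives the first assertion: $E$ is the disjoint union of two full subsets. The countable statement is obtained identically from the second conclusion of Theorem 2.15: the sequence $\{\Lambda_n\}_{n=1}^{\infty}$ it produces is mutually disjoint with $\bigcup_n\Lambda_n=E$, and setting $E_n=\Lambda_n$ gives mutually disjoint subsets of $E$, with union $E$, each full in $E$.

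The substantive difficulty lives entirely in Theorem 2.15, so the only genuine point to check here is the one place where the specific structure of the Marczewski base enters, namely the verification that singletons are $s_0$, i.e. the point-meagerness recalled at the outset; once that is noted the corollary is a pure instantiation of Theorem 2.15. The final clause, that no two distinct $E_n$ can be separated by $s$-sets, is not a new argument but the standard consequence of fullness already used in the proof of Theorem 2.11 and recorded in Corollary 2.14: a separating $s$-set, together with the hull of $E$ and Theorems 2.9 and 2.10, would produce an abundant Baire set lying in $E$ yet missing one of the full pieces, contradicting fullness.
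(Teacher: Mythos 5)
Your proposal is correct and matches the paper's own route exactly: the paper derives this corollary from Theorem 2.15 by partitioning $E$ into singletons, justified by the single observation you make, namely that the Marczewski base is a perfect base (Example 2.4), every perfect base is point-meager (Theorem 3, I, Ch 5, [6]), and hence each singleton is $s_0$. Your instantiation $\Lambda=E$, $A_x=\{x\}$ and the remark on non-separability by $s$-sets are precisely the paper's (largely implicit) argument, spelled out.
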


 \bibliographystyle{plain}

\end{document}